\newtheorem{theorem}{Theorem}[section]
\newtheorem{definition}[theorem]{Definition}
\newtheorem{cor}[theorem]{Corollary}
\newtheorem{lem}[theorem]{Lemma}
\newtheorem{pro}[theorem]{Proposition}
\DeclareMathOperator{\Real}{Re}
\begin{document}
\title{\bf Evans-Selberg potential on planar domains\rm}

\author{Robert Xin DONG}
\date{}
\maketitle

\begin{abstract}
We provide explicit formulas of Evans kernels, Evans-Selberg potentials and fundamental metrics on potential-theoretically parabolic planar domains.
\end{abstract}

\renewcommand{\thefootnote}{\fnsymbol{footnote}}
\footnotetext{\hspace*{-7mm} 
\begin{tabular}{@{}r@{}p{16.5cm}@{}}
& 2010 Mathematics Subject Classification. Primary 30F15; Secondary 31A05, 31A15, 30F20\\
& Key words and phrases. Evans-Selberg potential, Evans kernel, potential-theoretically parabolic Riemann surface, Green function, Green kernel, fundamental metric
\end{tabular}}
\section{Introduction}
All open Riemann surfaces can be classified in the potential-theoretical sense into two types, namely hyperbolic ones and parabolic ones. The latter case happens if and only if there exists no Green function, or equivalently there exists no non-constant subharmonic function bounded from above. On a potential-theoretically parabolic Riemann surface, there exists a so-called Evans-Selberg potential, whose existence is equivalent to the parabolicity condition (see \cite{Ev, Ku, Na62, Se}). In contrast to various applications of Evans-Selberg potentials (see \cite{M, SV}), concrete examples are not quite understood. In this paper, on potential-theoretically parabolic planar domains, we provide explicit formulas of Evans-Selberg potentials, as well as formulas of the so-called Evans kernels and fundamental metrics, whose definitions will be recalled in the next section.

\begin{theorem} \label{Evans-Selberg-C-0} There exist Evans-Selberg potentials on $\mathbb C\setminus \{0\}$ with a pole $q$ given by $$e_{q}(p):=\log \frac{\left|p-q\right|}{\left|p\right|^k \left|q\right|^l},$$ where $k, l\in (0, 1).$ 
\end{theorem}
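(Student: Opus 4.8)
The plan is to check directly that the given $e_q$ satisfies the three conditions in the definition of an Evans-Selberg potential with pole $q$ on the parabolic surface $R:=\mathbb C\setminus\{0\}$: that $e_q$ is harmonic on $R\setminus\{q\}$, that it has the prescribed logarithmic singularity at $q$ (so that $e_q(p)-\log|p-q|$ extends harmonically across $q$), and that $e_q(p)\to+\infty$ as $p$ tends to the ideal boundary of $R$, equivalently that every sublevel set $\{p\in R:e_q(p)\le c\}$ is relatively compact in $R$. For the computations it is convenient to write
$$e_q(p)=\log|p-q|-k\log|p|-l\log|q|.$$

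The first two conditions are immediate. On $\mathbb C\setminus\{q\}$ the function $\log|p-q|$ is harmonic, on $\mathbb C\setminus\{0\}$ the function $\log|p|$ is harmonic, and $-l\log|q|$ is a constant; hence $e_q$ is harmonic on $\mathbb C\setminus\{0,q\}=R\setminus\{q\}$. Moreover, since $q\neq 0$, the difference $e_q(p)-\log|p-q|=-k\log|p|-l\log|q|$ is harmonic in a neighborhood of $q$, which gives the required singularity. The substantive point is the behavior at the ideal boundary, which for $R=\mathbb C\setminus\{0\}$ has two ends, namely $p\to 0$ and $p\to\infty$, so the growth must be examined along each. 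As $p\to 0$ we have $\log|p-q|\to\log|q|$ while $-k\log|p|\to+\infty$ because $k>0$, so $e_q(p)\to+\infty$. As $p\to\infty$ we have $\log|p-q|=\log|p|+\log|1-q/p|=\log|p|+o(1)$, so that $e_q(p)=(1-k)\log|p|-l\log|q|+o(1)\to+\infty$ because $k<1$. Therefore each set $\{p\in R:e_q(p)\le c\}$ is a closed, bounded subset of $\mathbb C$ that stays a positive distance from $0$, hence is compact in $R$, and $e_q$ is an Evans-Selberg potential with pole $q$ (in particular this re-exhibits the parabolicity of $\mathbb C\setminus\{0\}$). Note that $k>0$ is exactly what is needed at the puncture and $k<1$ exactly what is needed at $\infty$, so the range $k\in(0,1)$ is sharp for this family.

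I do not expect a genuine obstacle, since the argument is a verification; the one point requiring care is the correct identification of the ideal boundary of the two-ended parabolic surface $\mathbb C\setminus\{0\}$ and the need to control $e_q$ simultaneously at $0$ and at $\infty$ — it is precisely this two-sided requirement that forces $0<k<1$ rather than a one-sided condition. The parameter $l$ contributes only the additive constant $-l\log|q|$ for a fixed pole, so for the statement about a single pole any real $l$ would do. Recording $l\in(0,1)$ aligns the range of $l$ with that of $k$; the same computation, with the roles of $k$ and $l$ exchanged under $p\leftrightarrow q$, then shows that $E(p,q):=e_q(p)$ is also an Evans-Selberg potential in $q$ for each fixed $p$, i.e. an Evans kernel on $\mathbb C\setminus\{0\}$.
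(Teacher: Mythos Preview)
Your verification is correct and coincides with the paper's treatment of this theorem: after deriving the symmetric case $k=l=\tfrac12$ as a limit of Green kernels on exhausting annuli via Nakai's Proposition~\ref{Nak}, the paper simply remarks that dropping symmetry and checking Definition~\ref{def-Evans} yields Theorem~\ref{Evans-Selberg-C-0}, which is exactly the three-condition check you carry out.

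One small correction to your closing commentary: being an Evans--Selberg potential in each variable separately does not make $E(p,q):=e_q(p)$ an Evans kernel. The definition of an Evans kernel additionally requires symmetry $E(p,q)=E(q,p)$, which forces $k=l$, and that $E(p,q)-E(p,q')$ be bounded near the ideal boundary for every pair $(q,q')$. So for $k\neq l$ your $e_q(p)$ gives Evans--Selberg potentials (Theorem~\ref{Evans-Selberg-C-0}) but not Evans kernels; the latter are the content of Theorem~\ref{Evans-C-0}.
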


Using this $e_{q} (p)$, we derive a corollary as follows:

\begin{cor} \label{fund-C-0} There exist fundamental metrics on $\mathbb C\setminus \{0\}$ (in coordinate $z$) given by $$|z|^{-s}|dz|^2,$$ where $s\in (0, 2).$ \end{cor}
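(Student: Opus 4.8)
The plan is to read the fundamental metric straight off the closed form of the Evans--Selberg potential provided by Theorem~\ref{Evans-Selberg-C-0}, feeding it into the construction of the fundamental metric recalled in Section~2. In analogy with the logarithmic capacity (Suita) metric of a hyperbolic domain, the fundamental metric at a point $q$ of a potential-theoretically parabolic surface is obtained by stripping the logarithmic singularity off the Evans kernel $E(\cdot,q)$ at $q$ and exponentiating the resulting Robin-type constant; in the coordinate $z$ on $\mathbb C\setminus\{0\}$ this is
$$\rho(q)\,|dq|^2,\qquad \rho(q)=\exp\Bigl(\lim_{p\to q}\bigl(E(p,q)-\log|p-q|\bigr)\Bigr).$$

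First I would specialize Theorem~\ref{Evans-Selberg-C-0} to $l=k$. The function $E(p,q):=\log\frac{|p-q|}{|p|^{k}|q|^{k}}$ with $k\in(0,1)$ is symmetric in $p$ and $q$, is jointly continuous off the diagonal, and for each fixed $q$ equals the Evans--Selberg potential $e_q$ of the theorem; hence it is an Evans kernel on $\mathbb C\setminus\{0\}$ in the sense of Section~2. Nothing is lost by imposing $l=k$, since every value of the target parameter will still be attained.

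The only computation is elementary: from the formula, $E(p,q)-\log|p-q|=-k\log|p|-k\log|q|$ identically on $(\mathbb C\setminus\{0\})^2$, so $\lim_{p\to q}\bigl(E(p,q)-\log|p-q|\bigr)=-2k\log|q|$ and therefore $\rho(q)=|q|^{-2k}$. Thus the fundamental metric attached to this Evans kernel is $|q|^{-2k}\,|dq|^2$ on $\mathbb C\setminus\{0\}$, a genuine conformal Riemannian metric because $|q|^{-2k}$ is smooth and strictly positive there. Writing $s:=2k$ and letting $k$ range over $(0,1)$ gives precisely the metrics $|z|^{-s}|dz|^2$ with $s\in(0,2)$, which is the assertion.

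I do not expect a real obstacle: once the Section~2 definition is in hand, the proof is a substitution followed by a one-line limit. The only points deserving a word of care are checking that the displayed closed form genuinely qualifies as an Evans kernel (symmetry, joint continuity off the diagonal, normalized logarithmic pole), all immediate from the formula, and matching the numerical normalization in the Section~2 definition of the Robin-type constant — if a non-symmetric potential is permitted one would instead read off $s=k+l$ from $e_q(p)$ directly — noting that in every case the exponent sweeps out the full interval $(0,2)$ as the parameters vary over $(0,1)$.
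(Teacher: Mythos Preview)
Your proposal is correct and matches the paper's own argument, which is the one-line observation that Corollary~\ref{fund-C-0} follows from Theorem~\ref{Evans-Selberg-C-0} and Definition~\ref{fund}. Note only that Definition~\ref{fund} takes an Evans--Selberg potential as input, not an Evans kernel, so your specialization to $l=k$ and the verification of the kernel axioms are unnecessary; as you yourself remark at the end, plugging the general $e_q(p)$ into the definition yields $s=k+l$ directly.
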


For the Evans kernels, we have the following theorem.

\begin{theorem} \label{Evans-C-0} There exist Evans kernels on $\mathbb C\setminus \{0\}$ given by $$e(p, q):=\log \frac{\left|p-q\right|}{\left|pq\right|^l},$$ where $l\in (0, 1).$ 
\end{theorem}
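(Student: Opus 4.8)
The plan is to verify that the closed-form expression $e(p,q):=\log\frac{|p-q|}{|pq|^l}$ satisfies the defining conditions of an Evans kernel on $\mathbb{C}\setminus\{0\}$ recalled in Section~2. The economical route is to observe that, for $k=l$, the Evans--Selberg potential of Theorem~\ref{Evans-Selberg-C-0} is exactly our candidate kernel:
$$
e_q(p)\Big|_{k=l}=\log\frac{|p-q|}{|p|^{l}|q|^{l}}=\log\frac{|p-q|}{|pq|^{l}}=e(p,q).
$$
Hence, for every fixed $q\in\mathbb{C}\setminus\{0\}$ and every $l\in(0,1)$, Theorem~\ref{Evans-Selberg-C-0} already supplies: harmonicity of $p\mapsto e(p,q)$ on $(\mathbb{C}\setminus\{0\})\setminus\{q\}$; the correct logarithmic singularity at the pole $q$ (the remainder $-l\log|p|-l\log|q|$ being harmonic near $q$, as $q\neq0$); and divergence to $+\infty$ along the ideal boundary. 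The one genuinely new point is symmetry, and $e(p,q)=\log|p-q|-l\log|p|-l\log|q|$ is visibly symmetric in $p$ and $q$; together with the joint real-analyticity of $e$ away from the diagonal $\{p=q\}$ (again immediate from the formula), this yields every clause in the definition of an Evans kernel.

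Should a reader prefer an argument independent of Theorem~\ref{Evans-Selberg-C-0}, I would unpack the same four verifications directly. (i) Symmetry and joint smoothness off $\{p=q\}$ are read off the formula. (ii) For fixed $q$, $\log|p-q|$ is harmonic on $\mathbb{C}\setminus\{q\}$, $l\log|p|$ is harmonic on $\mathbb{C}\setminus\{0\}$, and $l\log|q|$ is constant, so $e(\cdot,q)$ is harmonic on $(\mathbb{C}\setminus\{0\})\setminus\{q\}$. (iii) Writing $e(p,q)=\log|p-q|+h(p)$ with $h(p)=-l\log|p|-l\log|q|$ harmonic in a neighbourhood of $q$, and using the global coordinate $z=p$ on $\mathbb{C}\setminus\{0\}$, exhibits the required logarithmic singularity of $e(\cdot,q)$ at $q$. (iv) The ideal boundary of $\mathbb{C}\setminus\{0\}$ has the two ends $p\to0$ and $p\to\infty$; along the first, $e(p,q)=-l\log|p|+O(1)\to+\infty$ because $l>0$, and along the second, $e(p,q)=(1-l)\log|p|+O(1)\to+\infty$ because $l<1$.

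I do not expect a real obstacle: parabolicity of $\mathbb{C}\setminus\{0\}$ — the condition under which Evans kernels can exist at all — is already in force at this stage (it underlies Theorem~\ref{Evans-Selberg-C-0} and Corollary~\ref{fund-C-0}), and everything else is a short computation. The one place that calls for attention is step~(iv): it is precisely here that the hypothesis $l\in(0,1)$ is used in full, the end $\infty$ forcing $l<1$ while the end $0$ forces $l>0$, which also explains why the admissible range for the Evans kernel is $(0,1)$ rather than all of $(0,\infty)$. If desired, one could then mimic the passage from Theorem~\ref{Evans-Selberg-C-0} to Corollary~\ref{fund-C-0} and extract a fundamental metric from $e(p,q)$ as well, but that lies outside the present statement.
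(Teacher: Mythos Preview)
Your direct verification is sound, and in fact it is the route the paper itself indicates for general $l\in(0,1)$: after treating the case $l=1/2$, the paper simply says ``it suffices to check by definition.'' What the paper does differently is the $l=1/2$ case: there the kernel is not verified but \emph{derived}, via Nakai's Proposition~\ref{Nak}, by writing down the Courant--Hilbert Green kernel on the annulus $\{r<|p|<1/r\}$ and passing to the limit $r\to 0$ with an additive renormalisation. This simultaneously proves Theorem~\ref{Green-annulus}, which your argument does not touch. Your approach is shorter and self-contained; the paper's buys the Green-kernel approximation result as a by-product and explains where the formula comes from.

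One clause you omit: the definition of an Evans kernel recorded after Definition~\ref{def-Evans} requires, beyond symmetry and the Evans--Selberg conditions, that $e(p,q)-e(p,q')$ be \emph{bounded} near the ideal boundary for every pair $(q,q')$. The paper checks this explicitly (for $l=1/2$); you do not mention it, and joint real-analyticity off the diagonal is not a substitute. You should add the one-line computation
\[
e(p,q)-e(p,q')=\log\frac{|p-q|}{|p-q'|}+l\log\frac{|q'|}{|q|},
\]
which stays bounded as $p\to 0$ and as $p\to\infty$.
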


By the construction process of the above $e(p, q)$, we obtain the following result.

\begin{theorem} \label{Green-annulus} Given $t>0$, let $G_t(p,q)$ be the Green kernel on $\{z\in \mathbb C \,| e^{-2t}<|z|<e^{2t}\}$. Then, $$ \lim_{t \to +\infty}\left(G_t(p,q)+ \log\left(e^t-e^{-t}\right) \right)=\log \frac{\left|p-q\right|}{\sqrt{\left|pq\right|}},$$ uniformly on each compact subset of $\mathbb C\setminus \{0\}\times \mathbb C\setminus \{0\}$.
\end{theorem}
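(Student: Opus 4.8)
The plan is to make the Green kernel of the annulus completely explicit and then let $t\to+\infty$. Write $A_t=\{e^{-2t}<|z|<e^{2t}\}$, set $Q:=e^{-8t}\in(0,1)$, and introduce the Jacobi‑type product attached to $A_t$,
\[
P_t(w):=\prod_{n=0}^{\infty}\bigl(1-Q^{n}w\bigr)\prod_{n=1}^{\infty}\bigl(1-Q^{n}w^{-1}\bigr),\qquad w\in\mathbb{C}\setminus\{0\}.
\]
The reflections $z\mapsto e^{4t}/\bar z$ and $z\mapsto e^{-4t}/\bar z$ in the two boundary circles generate the group $z\mapsto e^{8tn}z$, so the ``even'' images of a pole $q$ form $\{e^{8tn}q\}_{n\in\mathbb{Z}}$ and the ``odd'' ones $\{e^{8tn+4t}/\bar q\}_{n\in\mathbb{Z}}$; these are precisely the zero sets of $p\mapsto P_t(p/q)$ and $p\mapsto P_t(p\bar q e^{-4t})$, of which exactly one point each (namely $p=q$) lies in $A_t$. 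I would then invoke uniqueness of the Green kernel to conclude that
\[
G_t(p,q)=\log\left|\frac{P_t(p/q)}{P_t\!\left(p\bar q e^{-4t}\right)}\right|+\alpha_t\log|p|+\beta_t
\]
for suitable real $\alpha_t=\alpha_t(q)$, $\beta_t=\beta_t(q)$, the affine term being forced by $G_t|_{\partial A_t}=0$ (the first summand is already harmonic on $A_t\setminus\{q\}$ and produces the $\log|p-q|$ singularity there).

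To pin down $\alpha_t,\beta_t$ I would use the functional equations $P_t(Qw)=P_t(w^{-1})=-w^{-1}P_t(w)$, immediate from the definition, together with $P_t(\bar w)=\overline{P_t(w)}$. On $|p|=e^{2t}$ one has $\bar p=e^{4t}/p$, and these identities give $\bigl|P_t(p/q)/P_t(p\bar q e^{-4t})\bigr|=e^{2t}/|q|$; on $|p|=e^{-2t}$ one has $\bar p=e^{-4t}/p$ and gets $\bigl|P_t(p/q)/P_t(p\bar q e^{-4t})\bigr|=1$. Imposing $G_t=0$ on the two circles yields the linear system $2t-\log|q|+2t\alpha_t+\beta_t=0$ and $-2t\alpha_t+\beta_t=0$, hence
\[
\alpha_t=\frac{\log|q|}{4t}-\frac12,\qquad \beta_t=\frac{\log|q|}{2}-t.
\]
(From these one checks, as a consistency test, that $G_t(p,q)=G_t(q,p)$.)

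Finally I would pass to the limit for $p,q$ in a fixed compact set $K\subset\mathbb{C}\setminus\{0\}$. All of $Q^{n}p/q$, $Q^{n}q/p$, $Q^{n}p\bar q e^{-4t}$ and $Q^{n}e^{4t}/(p\bar q)$ tend to $0$ uniformly on $K\times K$, so the products defining $P_t$ collapse; after dividing out the only vanishing factor $1-p/q$ one obtains, uniformly on $K\times K$ (hence across the diagonal),
\[
\log|P_t(p/q)|-\log|p-q|\ \longrightarrow\ -\log|q|,\qquad \log\bigl|P_t(p\bar q e^{-4t})\bigr|\ \longrightarrow\ 0 .
\]
Moreover $\alpha_t\log|p|\to-\tfrac12\log|p|$ and, decisively, $-t+\log(e^{t}-e^{-t})=\log(1-e^{-2t})\to0$. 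Adding up,
\[
G_t(p,q)+\log\bigl(e^{t}-e^{-t}\bigr)\ \longrightarrow\ \log|p-q|-\log|q|-\tfrac12\log|p|+\tfrac12\log|q|=\log\frac{|p-q|}{\sqrt{|pq|}}
\]
uniformly on $K\times K$, which is the assertion; note that the limit is exactly the Evans kernel of Theorem~\ref{Evans-C-0} with $l=\tfrac12$. The substantive work is (i) establishing the closed form for $G_t$ — the functional equations of $P_t$ and the two boundary evaluations producing $\alpha_t,\beta_t$ — and (ii) arranging the limit uniformly near $p=q$, which the splitting $G_t=\log|p-q|+(G_t-\log|p-q|)$ takes care of; the remaining estimates are routine.
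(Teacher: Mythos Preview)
Your argument is correct. Both you and the paper prove the theorem by writing the Green kernel of the annulus as a Jacobi--type theta product and then letting $t\to\infty$; with $r=e^{-2t}$ one has $Q=r^4$, and your $P_t$ is the same product that appears in the Courant--Hilbert formula the paper quotes. The execution differs, however. The paper imports the closed form for $g_T$ from Courant--Hilbert and then appeals to Nakai's abstract approximation theorem (Proposition~\ref{Nak}) to obtain the uniform limit, afterwards identifying $A_{e^{-2t}}$ with the sublevel set $R_T$; you instead build the Green kernel from scratch via the method of images, solve explicitly for the affine correction $\alpha_t\log|p|+\beta_t$ using the functional equation $P_t(Qw)=P_t(w^{-1})=-w^{-1}P_t(w)$, and then take the limit by hand, noting that $\beta_t+\log(e^t-e^{-t})=\tfrac12\log|q|+\log(1-e^{-2t})$ and that the infinite products collapse to their leading factors uniformly on compacta. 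Your route is longer but entirely self-contained and avoids the somewhat circular flavour of invoking Nakai's theorem (which presupposes an Evans kernel) to \emph{find} the Evans kernel; the paper's route is shorter once one is willing to cite the two external inputs.
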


Similarly, we have results for the twice punctured complex plane, which is also potential-theoretically parabolic although it is hyperbolic in the Poincar\'e sense.

\begin{theorem} \label{Evans-Selberg-C-0-1} There exist Evans-Selberg potentials on $\mathbb C\setminus \{0,1\}$ with a pole $q$ given by $$\log \frac{|p-q|}{{\left|p\right|^{k}\left|q\right|^{l}} {\left|p-1\right|^{m} \left|q-1\right|^{n}}},$$ where $k, l, m, n>0$, $k+m<1$ and  $l+n<1.$ \end{theorem}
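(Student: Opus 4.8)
The plan is to verify directly that the function
$$e_{q}(p):=\log\frac{|p-q|}{|p|^{k}\,|q|^{l}\,|p-1|^{m}\,|q-1|^{n}}$$
(for a pole $q\in\mathbb C\setminus\{0,1\}$ and $p\in\mathbb C\setminus\{0,1\}$) meets the definition of an Evans-Selberg potential recalled in Section~2, arguing exactly as in the proof of Theorem~\ref{Evans-Selberg-C-0} but with one extra puncture. Concretely, one has to check three things: (i) $e_{q}$ is harmonic on $(\mathbb C\setminus\{0,1\})\setminus\{q\}$; (ii) $e_{q}(p)-\log|p-q|$ extends to a harmonic function across $q$; and (iii) $e_{q}(p)\to+\infty$ as $p$ approaches the ideal boundary of $\mathbb C\setminus\{0,1\}$, i.e., this being a thrice-punctured sphere, as $p\to 0$, as $p\to 1$, and as $p\to\infty$.

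For (i) and (ii), expand $e_{q}(p)=\log|p-q|-k\log|p|-m\log|p-1|-l\log|q|-n\log|q-1|$. Each of $\log|p-q|$, $\log|p|$, $\log|p-1|$ is harmonic away from its singularity and the last two terms are constants in $p$, so $e_{q}$ is harmonic on $\mathbb C\setminus\{0,1,q\}$; and since $q\notin\{0,1\}$ the terms $-k\log|p|$ and $-m\log|p-1|$ are harmonic near $q$, hence $e_{q}(p)-\log|p-q|$ is harmonic near $q$, giving (i) and (ii). For (iii) I would treat the three ends one at a time: as $p\to 0$, $e_{q}(p)=-k\log|p|+O(1)\to+\infty$ since $k>0$; as $p\to 1$, $e_{q}(p)=-m\log|p-1|+O(1)\to+\infty$ since $m>0$; and as $p\to\infty$, using $\log|p-q|=\log|p|+o(1)$ and $\log|p-1|=\log|p|+o(1)$, one gets $e_{q}(p)=(1-k-m)\log|p|+O(1)\to+\infty$ since $k+m<1$. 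This shows that $e_{q}$ is an Evans-Selberg potential with pole $q$ (and, incidentally, re-proves that $\mathbb C\setminus\{0,1\}$ is parabolic).

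It remains to account for the conditions $l,n>0$ and $l+n<1$, which played no part in (i)--(iii); there $l$ and $n$ enter only through the $p$-constant $-l\log|q|-n\log|q-1|$. Their role is the mirror image of that of $k$ and $m$: the formula is symmetric under $p\leftrightarrow q$ together with $k\leftrightarrow l$, $m\leftrightarrow n$, so for fixed $p$ the function $q\mapsto e_{q}(p)$ is harmonic on $(\mathbb C\setminus\{0,1\})\setminus\{p\}$, has the logarithmic singularity $\log|q-p|$ at $q=p$, and, by the same end-by-end computation with $(l,n)$ in place of $(k,m)$, tends to $+\infty$ as $q\to 0,1,\infty$ exactly when $l,n>0$ and $l+n<1$. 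So these are precisely the conditions making $e_{q}(p)$ an Evans-Selberg potential in the pole variable as well, which matches the two-variable framework of Section~2 and the analogous constraint $k,l\in(0,1)$ in Theorem~\ref{Evans-Selberg-C-0}; if moreover $k=l$ and $m=n$, the same function is a genuine symmetric Evans kernel on $\mathbb C\setminus\{0,1\}$, as in Theorem~\ref{Evans-C-0}.

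There is no serious obstacle: the proof is a short verification. The only place calling for a little care is the limit $p\to\infty$, where one must combine the three logarithms correctly to see that the leading coefficient is $1-k-m$, so that $k+m<1$ is the decisive hypothesis there, together with the bookkeeping that the ideal boundary of $\mathbb C\setminus\{0,1\}$ genuinely consists of all three punctures, so that $k>0$, $m>0$, and $k+m<1$ are each used. Producing $e_{q}$ canonically, as a normalized limit of Green kernels over an exhaustion of $\mathbb C\setminus\{0,1\}$ in the spirit of Theorem~\ref{Green-annulus}, would require real work, but is not needed for the existence statement asserted here.
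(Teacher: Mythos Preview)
Your proposal is correct and follows exactly the route the paper indicates: the paper's proof of Theorem~\ref{Evans-Selberg-C-0-1} consists of the single remark that, by analogy with Theorems~\ref{Evans-Selberg-C-0} and~\ref{Evans-C-0}, one ``constructs by hand'' and checks the definition, and you have carried out that verification in full. Your observation that the constraints $l,n>0$, $l+n<1$ play no role in conditions (i)--(iii) for fixed $q$ and are instead the symmetric conditions in the pole variable is accurate and a useful clarification that the paper leaves implicit.
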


\begin{cor} There exist fundamental metrics on $\mathbb C\setminus \{0, 1\}$ (in coordinate $z$) given by $$|z|^{-s}|z-1|^{-j}|dz|^2,$$ where $s, j >0$ and $s+j<2.$ \end{cor}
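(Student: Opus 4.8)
\section*{Proof proposal}

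The plan is to obtain this as an immediate corollary of Theorem~\ref{Evans-Selberg-C-0-1}, by the same recipe that produces Corollary~\ref{fund-C-0} from Theorem~\ref{Evans-Selberg-C-0}. Recall from Section~2 that a fundamental metric is manufactured from a family of Evans--Selberg potentials $\{e_{q}\}$ by deleting the logarithmic pole along the diagonal and exponentiating the regular part: with $z$ a local coordinate centred at the pole $q$, set
$$\gamma_{q}:=\lim_{p\to q}\bigl(e_{q}(p)-\log|z(p)-z(q)|\bigr),$$
the Evans--Robin constant at $q$; the fundamental metric then has, at the point $q$, a conformal density built from $e^{\gamma_{q}}$ with the normalization prescribed in Section~2, and letting $q$ range over the surface produces a metric on all of $\mathbb{C}\setminus\{0,1\}$. (Here the three parameter constraints in Theorem~\ref{Evans-Selberg-C-0-1} encode exactly the three ideal boundary points $0,1,\infty$: positivity of $k,m$ forces $e_{q}$ to blow up at $0$ and $1$, and $k+m<1$ forces the asymptotics $e_{q}(p)\sim(1-k-m)\log|p|\to+\infty$ at $\infty$, with the $l,n$ conditions being the symmetric statements in $q$.)

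The computation itself is immediate from the closed form. For
$$e_{q}(p)=\log\frac{|p-q|}{|p|^{k}|q|^{l}|p-1|^{m}|q-1|^{n}}$$
the function $e_{q}(p)-\log|p-q|=-k\log|p|-l\log|q|-m\log|p-1|-n\log|q-1|$ is harmonic in $p$ on $\mathbb{C}\setminus\{0,1\}$, so that $\gamma_{q}=-(k+l)\log|q|-(m+n)\log|q-1|$. Feeding this into the Section~2 construction, whose normalizing constant is pinned down by requiring (as in Corollary~\ref{fund-C-0}, where the one--puncture computation gives $\gamma_{q}=-(k+l)\log|q|$) that the exponent $s$ fill exactly the interval $(0,2)$ as $k,l$ run over $(0,1)$, yields precisely the metric $|z|^{-s}|z-1|^{-j}|dz|^{2}$ with $s=k+l$ and $j=m+n$.

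It then remains to match the admissible ranges. From $k,l,m,n>0$ we get $s,j>0$, and from $k+m<1$ and $l+n<1$ we get $s+j=(k+m)+(l+n)<2$; conversely, any pair $s,j>0$ with $s+j<2$ is realized by $k=l=s/2$ and $m=n=j/2$, for which $k+m=l+n=(s+j)/2<1$. One should also record that the density so obtained is smooth and positive on $\mathbb{C}\setminus\{0,1\}$ and genuinely meets the defining clause of a fundamental metric in Section~2, but this is inherited from the boundary behaviour of the $e_{q}$ set up in the proof of Theorem~\ref{Evans-Selberg-C-0-1}. The only delicate point in the whole argument is fixing the normalizing constant and sign convention in Section~2's definition so that the exponents land in the stated range; past that bookkeeping, the corollary is the elementary calculation above, identical to the one--puncture case with $|z|$ replaced by $|z|^{k}|z-1|^{m}$ in the denominator of $e_{q}$.
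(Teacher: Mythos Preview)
Your proof is correct and follows exactly the paper's approach: apply Definition~\ref{fund} to the explicit Evans--Selberg potentials of Theorem~\ref{Evans-Selberg-C-0-1}, compute the Robin constant $\gamma_q=-(k+l)\log|q|-(m+n)\log|q-1|$, exponentiate, and match parameter ranges. Your hedging about a ``normalizing constant and sign convention'' is unnecessary, though, since Definition~\ref{fund} is completely explicit ($c(z)=\exp\gamma_q$ with no further constant), so there is no bookkeeping beyond the direct computation you already carried out.
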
 

\begin{theorem} \label{Evans-C-0-1} There exist Evans kernels on $\mathbb C\setminus \{0,1\}$ given by $$\log \frac{|p-q|}{{|pq|^{k}} {|(p-1)(q-1)|^{m}}},$$ where $k, m>0$ and $k+m<1.$ \end{theorem}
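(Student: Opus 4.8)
The plan is to reduce Theorem~\ref{Evans-C-0-1} to Theorem~\ref{Evans-Selberg-C-0-1}, exploiting that the candidate kernel
$$e(p,q):=\log\frac{|p-q|}{|pq|^{k}\,|(p-1)(q-1)|^{m}}$$
is manifestly symmetric in $p$ and $q$. First I would recall the defining properties of an Evans kernel on a potential-theoretically parabolic Riemann surface $R$ (as set up in the next section): a function $e$ on $R\times R$ that is symmetric, continuous into $[-\infty,+\infty]$, equal to $-\infty$ exactly on the diagonal, and such that for every fixed $q\in R$ the slice $e(\cdot,q)$ is an Evans-Selberg potential with pole $q$ --- harmonic on $R\setminus\{q\}$, with $e(\cdot,q)-\log|\cdot-q|$ harmonic across $q$ in a local coordinate, and $e(p,q)\to+\infty$ as $p$ tends to the ideal boundary of $R$.

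Next, fixing $q\in\mathbb C\setminus\{0,1\}$, I would observe that $e(p,q)$ is exactly the Evans-Selberg potential of Theorem~\ref{Evans-Selberg-C-0-1} under the specialization $l:=k$ and $n:=m$. Under this choice the two hypotheses $k+m<1$ and $l+n<1$ there both reduce to the standing hypothesis $k+m<1$, so that theorem directly yields: $e(\cdot,q)$ is harmonic on $(\mathbb C\setminus\{0,1\})\setminus\{q\}$; it has the correct logarithmic pole at $q$, since $-k\log|p|-m\log|p-1|$ is harmonic near $q$ (as $q\neq 0,1$); and $e(p,q)\to+\infty$ at each ideal boundary point $0$, $1$, $\infty$. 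For the point $\infty$ the key computation is $e(p,q)=(1-k-m)\log|p|+O(1)$ as $|p|\to\infty$, which diverges precisely because $1-k-m>0$; near $0$ and near $1$ the divergence is automatic from the terms $-k\log|p|$ and $-m\log|p-1|$.

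Finally I would record the remaining items of the Evans-kernel definition: symmetry $e(p,q)=e(q,p)$ is immediate from the formula; joint continuity and the behaviour on the diagonal are clear; and the convergence $e(p,q)\to+\infty$ as $p$ approaches $0$, $1$ or $\infty$ is locally uniform in $q$ on compact subsets of $\mathbb C\setminus\{0,1\}$, because the divergent terms $-k\log|p|$, $-m\log|p-1|$ and $(1-k-m)\log|p|$ are independent of $q$ up to a bounded error.

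I expect the only delicate point to be the bookkeeping at the three ideal boundary points, and in particular confirming that the single inequality $k+m<1$ simultaneously guarantees divergence to $+\infty$ at $\infty$ while remaining compatible with the (automatic) divergence at $0$ and $1$. As an alternative that does not quote Theorem~\ref{Evans-Selberg-C-0-1}, one could instead build $e(p,q)$ as a limit $\lim_{t\to+\infty}\bigl(G_t(p,q)+c_t\bigr)$ of suitably normalized Green kernels $G_t$ on an exhaustion of $\mathbb C\setminus\{0,1\}$ by relatively compact subdomains, in the spirit of the construction behind Theorem~\ref{Green-annulus}; but the symmetrization argument above is shorter and self-contained.
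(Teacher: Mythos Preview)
Your approach is essentially the paper's: the paper simply says that, guided by the formulas in Theorems~\ref{Evans-Selberg-C-0} and~\ref{Evans-C-0}, one ``constructs by hand'' the kernel and checks the definition directly; your reduction to Theorem~\ref{Evans-Selberg-C-0-1} with $l=k$, $n=m$ is just an organized way of carrying out that verification.

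One point to tighten: the definition of Evans kernel used in the paper (following Nakai) requires, in addition to symmetry and the Evans--Selberg property in each slice, that $e(p,q)-e(p,q')$ be \emph{bounded} near the ideal boundary for every pair $q,q'\in\Sigma$. Your recalled definition omits this clause, and you only address it implicitly in your final paragraph (``divergent terms \dots\ independent of $q$ up to a bounded error''). You should state and verify this condition explicitly: since
\[
e(p,q)-e(p,q')=\log\frac{|p-q|}{|p-q'|}-k\log\frac{|q|}{|q'|}-m\log\frac{|q-1|}{|q'-1|},
\]
and $\log\tfrac{|p-q|}{|p-q'|}$ stays bounded as $p\to 0,1,\infty$, the required boundedness follows. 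With that adjustment your argument is complete and matches the paper's intended verification.
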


\section{Preliminaries}

Let's first recall the Removable Singularity Theorem for a harmonic function (cf. \cite{B, Ro}).

\begin{pro} \label{RSTH} If $u$ is harmonic and bounded on the punctured disc $\{z\in \mathbb C: 0<|z|< 1\}$, then it extends to a harmonic function on the whole disc.
\end{pro}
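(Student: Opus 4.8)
The plan is to prove this by the classical device of comparing $u$ with the fundamental harmonic function $\log(1/|z|)$, which is the only obstruction to removability across the origin. Fix any radius $r\in(0,1)$; it suffices to extend $u$ harmonically across $0$ on the disc $\{|z|<r\}$, since such an extension is forced to coincide with $u$ on the dense punctured disc $\{0<|z|<r\}$ and hence is unique, so the extensions obtained for different $r$ are compatible and letting $r\to1$ produces the extension on all of $\{|z|<1\}$. On $\{|z|<r\}$ I would let $v$ be the Poisson integral of the continuous restriction $u|_{\{|z|=r\}}$, so that $v$ is harmonic on $\{|z|<r\}$, continuous up to the boundary circle, and agrees with $u$ there.

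The heart of the argument is to show that $w:=u-v$ vanishes identically on $\{0<|z|<r\}$. By hypothesis $w$ is harmonic there, continuous on $\{0<|z|\le r\}$, bounded, say $|w|\le M$, and $w=0$ on $\{|z|=r\}$. For each $\varepsilon>0$ consider $h_{\varepsilon}(z):=\varepsilon\log(r/|z|)$, which is harmonic and positive on $\{0<|z|<r\}$ and tends to $+\infty$ as $z\to0$. Given $\varepsilon$, choose $\delta\in(0,r)$ so small that $h_{\varepsilon}\ge M$ on the circle $\{|z|=\delta\}$. Then on the boundary of the annulus $\{\delta\le|z|\le r\}$ one has $w\le h_{\varepsilon}$, since $w=0=h_{\varepsilon}$ on the outer circle and $w\le M\le h_{\varepsilon}$ on the inner circle; the maximum principle gives $w\le h_{\varepsilon}$ throughout the annulus, and since the right-hand side does not depend on $\delta$, letting $\delta\to0$ yields $w(z)\le\varepsilon\log(r/|z|)$ for every $z$ with $0<|z|<r$. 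Letting $\varepsilon\to0$ gives $w\le0$, and applying the same reasoning to $-w$ gives $w\ge0$, hence $w\equiv0$; thus $u=v$ on the punctured disc and $v$ is the desired harmonic extension.

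The only genuinely delicate point is this two-sided barrier estimate, i.e.\ recognizing that $\log(1/|z|)$ is exactly the right comparison function and carefully tracking the inequalities on the two boundary circles of the annulus as $\delta\to0$ and $\varepsilon\to0$; the existence of $v$ via the Poisson kernel and the patching over $r\in(0,1)$ are routine. As an alternative one could instead expand $u$ on the annulus in a Fourier series $u(\rho e^{i\theta})=a_{0}+b_{0}\log\rho+\sum_{n\neq0}(a_{n}\rho^{n}+b_{n}\rho^{-n})e^{in\theta}$ and observe that boundedness forces $b_{0}=0$ and all $b_{n}=0$, leaving a series that converges to a function harmonic on the full disc; but this needs the expansion to be justified first, so the barrier argument is the cleaner route.
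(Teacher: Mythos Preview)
Your barrier argument is correct and complete: the comparison with $\varepsilon\log(r/|z|)$ on shrinking annuli, followed by $\varepsilon\to0$, is exactly the standard elementary proof, and your handling of the Poisson integral $v$ and the patching over $r$ is fine.

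The paper, however, does not give a direct proof at all. It simply remarks that Proposition~\ref{RSTH} follows from B\^ocher's Theorem (Proposition~\ref{Bochner}): if $u$ is bounded one adds a constant to make it positive, B\^ocher then yields $u(z)=k(z)-b\log|z|$ with $k$ harmonic on the full disc and $b\ge0$, and boundedness of $u$ near $0$ forces $b=0$, so $u=k$ extends. Your route is more self-contained and elementary, needing only the Poisson kernel and the maximum principle, whereas the paper's route imports a stronger structural statement (itself quoted from \cite{ABR}) that classifies all positive harmonic functions near an isolated singularity. The trade-off is that your argument is longer but stands alone, while the paper's one-line deduction leans on a result whose proof is of comparable depth to what you wrote.
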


Proposition \ref{RSTH} could follow from the following Proposition \ref{Bochner}, which describes the behaviors near isolated singularities (cf. \cite [p.50] {ABR}).

\begin{pro} [B$\hat o$chner's Theorem] \label{Bochner} Let $w\in D \subset \mathbb C$, and let $h$ be a positive harmonic function on $D-\{w\}$. Then $-h$ extends to be a subharmonic function on $D$, and there exists a harmonic function $k$ on $D$ and a constant $b \geq 0,$ such that 
$$h(z)=k(z)-b\log|z-w|,\ \,z\in D-\{w\}.$$
\end{pro}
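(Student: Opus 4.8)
The plan is to reduce to a model punctured disc, read off the coefficient $b$ from the rotational average of $h$, use positivity of $h$ (through Harnack's inequality) to control its growth at the puncture, and then remove the singular Fourier modes by a size comparison. By translating the pole to the origin and rescaling, it suffices to prove the local statement: if $h>0$ is harmonic on $\{0<|z|<1\}$, then $h(z)=k_0(z)-b\log|z|$ for some constant $b\ge 0$ and some harmonic function $k_0$ on $\{|z|<1\}$. Granting this on a small punctured disc about $w$, the function $h(z)+b\log|z-w|$ is harmonic on $D-\{w\}$ and coincides near $w$ with the harmonic function $k_0$, hence extends to a harmonic function $k$ on all of $D$; this gives the displayed formula, and then $-h(z)=-k(z)+b\log|z-w|$ exhibits $-h$ as the sum of a harmonic function and (since $b\ge 0$) a subharmonic one, i.e. as a subharmonic function on $D$ extending $-h$, which is the first assertion.

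For the local statement I would first study the rotational average $M(r):=\frac{1}{2\pi}\int_0^{2\pi}h(re^{i\theta})\,d\theta$. Integrating the polar form of $\Delta h=0$ in $\theta$ shows $\frac{d}{dr}(rM'(r))=0$, so $M(r)=A+B\log r$ for constants $A,B$; since $h>0$ keeps $M(r)>0$ while $\log r\to-\infty$ as $r\to 0^+$, necessarily $B\le 0$, and I set $b:=-B\ge 0$. Then Harnack's inequality, applied on the annuli $\{r/2<|z|<2r\}$ whose Harnack constant $C$ is independent of $r$ by scale invariance, gives $\sup_{|z|=r}h\le C\inf_{|z|=r}h\le C\,M(r)$, so $h(z)=O(\log(1/|z|))$ as $z\to 0$; consequently $H(z):=h(z)+b\log|z|$ is harmonic on $\{0<|z|<1\}$, has the constant rotational average $A$, and satisfies $|H(z)|=O(\log(1/|z|))$ near $0$.

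Next I would expand $H$ in its Fourier series in $\theta$ on the annulus, the $n$-th mode having radial part a combination of $r^n$ and $r^{-n}$ (solutions of the associated Euler equation) and the zeroth mode the form $\alpha_0+\beta_0\log r$. Constancy of the rotational average forces $\beta_0=0$, and comparing the size of $H$ on the circle $|z|=r$ (say its $L^2$-norm there) with the bound $|H|=O(\log(1/|z|))$ forces every $r^{-n}$ coefficient with $n\ge 1$ to vanish, because $r^{-2n}$ cannot be dominated by $(\log(1/r))^2$ as $r\to 0$. Hence $H(re^{i\theta})=\alpha_0+\sum_{n\ge 1}r^n(\alpha_n\cos n\theta+\gamma_n\sin n\theta)$, a series whose coefficients grow slowly enough that it converges on the full disc $\{|z|<1\}$ to a harmonic function $k_0$; thus $h(z)=k_0(z)-b\log|z|$ with $b\ge 0$. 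Matching rotational averages, $M(r)=k_0(0)-b\log r=A-b\log r$, is a consistency check, and undoing the reduction completes the proof.

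I expect the main obstacle to be the a priori estimate $h(z)=O(\log(1/|z|))$ near the puncture -- that is, ruling out faster-than-logarithmic blow-up -- since this is exactly where the positivity of $h$ is used, through Harnack's inequality on annuli; everything after that is a routine separation-of-variables computation. An alternative route that bypasses the Fourier analysis observes that $-h$, being subharmonic and bounded above on $D-\{w\}$ with the one-point set $\{w\}$ polar, extends to a subharmonic function $U$ on $D$; since $U$ is harmonic off $w$, its distributional Laplacian $\Delta U$ must be $2\pi b\,\delta_w$ for some $b\ge 0$, so $U(z)-b\log|z-w|$ is harmonic on $D$ and the decomposition follows.
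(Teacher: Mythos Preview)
The paper does not prove this proposition; it is quoted as a preliminary result with a pointer to \cite[p.~50]{ABR}. So there is no ``paper's own proof'' to compare against, and your argument stands on its own.

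Your proof is correct. The Harnack step on scaled annuli is the crux: scale invariance gives a uniform Harnack constant $C$, whence $\sup_{|z|=r}h\le C\,M(r)=C(A+b\log(1/r))$, and together with $h>0$ this yields $|H(z)|=O(\log(1/|z|))$. The subsequent Fourier--Laurent argument then kills the $r^{-n}$ modes by the size comparison you describe, and $\beta_0=0$ follows from the constant rotational average. One small point worth making explicit is why the remaining series $\alpha_0+\sum_{n\ge1}r^n(\alpha_n\cos n\theta+\gamma_n\sin n\theta)$ converges on the full disc: since it already represents $H$ on every circle $|z|=r_0<1$, the coefficients satisfy $|\alpha_n|,|\gamma_n|\le \|H\|_{L^\infty(|z|=r_0)}\,r_0^{-n}$ for each $r_0<1$, which is enough.

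The alternative route you sketch at the end is shorter and dovetails with the paper's own toolkit: since $-h<0$ is subharmonic and bounded above on $D\setminus\{w\}$ and $\{w\}$ is polar, Proposition~\ref{RSTSH} extends $-h$ subharmonically to $D$; its distributional Laplacian is then a non-negative measure supported at $w$, hence $2\pi b\,\delta_w$ with $b\ge0$, and Weyl's lemma makes $-h-b\log|\cdot-w|$ harmonic. Either approach is acceptable; the second is closer in spirit to how the paper uses B\^ocher's theorem in Section~4.
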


For a subharmonic function, a generalized Removable Singularity Theorem is described as follows (see \cite [Theorem 3.6.1] {Ra}):

\begin{pro} \label{RSTSH} Let $U$ be an open subset of $\mathbb C$, let $E$ be a closed polar set, and let $u$ be a subharmonic function on $U-E$. Suppose that each point of $U\cap E$ has a neighborhood $N$ such that $u$ is bounded from above on $N-E$. Then $u$ has a unique subharmonic extension to the whole of $U$.
\end{pro}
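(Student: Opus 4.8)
The statement is the classical removable-singularity theorem for subharmonic functions across a closed polar set, so the plan is to reduce it to two facts from potential theory: a closed polar set carries no two-dimensional Lebesgue measure, and, locally, a closed polar set is contained in the $-\infty$-set of a subharmonic function. Since subharmonicity is a local property and $u$ is already subharmonic on $U-E$, it suffices to produce a subharmonic extension in a neighborhood of each point $z_0\in U\cap E$. By hypothesis there is a disc $D=D(z_0,r)\Subset U$ with $u\le M$ on $D-E$; subtracting the constant $M$, I may assume $u\le 0$ on $D-E$.

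I would dispatch uniqueness first, as it is the easy half. Any two subharmonic extensions $v_1,v_2$ to $U$ agree on $U-E$, hence agree almost everywhere since $E$ has zero area. Because a subharmonic function recovers its value as the limit of its solid circular averages, $v_i(z)=\lim_{\rho\to 0^{+}}\frac{1}{\pi\rho^{2}}\int_{D(z,\rho)}v_i\,dA$, and these averages depend only on almost-everywhere values, it follows that $v_1\equiv v_2$. Consequently, once a local extension is shown to exist it is unique, so the local pieces automatically agree on overlaps and glue to a single subharmonic function on $U$ that equals $u$ on $U-E$.

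The existence is the heart of the matter. Since $E$ is polar, I may choose a subharmonic function $\psi\le 0$ with $\psi\not\equiv-\infty$ on a neighborhood of $\overline{D}$ and $E\cap\overline{D}\subseteq\{\psi=-\infty\}$. For $\varepsilon>0$ set $v_\varepsilon:=u+\varepsilon\psi$ on $D-E$ and $v_\varepsilon:=-\infty$ on $D\cap E$. Because $u$ is bounded above and $\psi\to-\infty$ at $E$, the function $v_\varepsilon$ is upper semicontinuous with $\limsup$ equal to $-\infty$ at each point of $E$, where the sub-mean-value inequality holds trivially; hence $v_\varepsilon$ is subharmonic on all of $D$. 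As $\varepsilon\downarrow 0$ the $v_\varepsilon$ increase to a limit $v_0$ that equals $u$ off the polar set $P:=\{\psi=-\infty\}\supseteq E$ and equals $-\infty$ on $P$, and its upper-semicontinuous regularization $v_0^{*}$ is subharmonic on $D$, being a regularized supremum of subharmonic functions that is not identically $-\infty$. Finally I would verify $v_0^{*}=u$ on $D-E$: there $v_0\le u$ near $z$ and $u$ is upper semicontinuous, so $v_0^{*}(z)=\limsup_{w\to z}v_0(w)\le\limsup_{w\to z}u(w)=u(z)$, while $v_0(z)=u(z)$ gives the reverse. Thus $v_0^{*}$ is a subharmonic extension, and by the uniqueness step it is \emph{the} extension and agrees with $\limsup_{w\to z,\,w\notin E}u(w)$ on $E$.

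The main obstacle I anticipate is the passage from the family $\{v_\varepsilon\}$ to a genuine subharmonic function on $D$: one must invoke, and handle carefully, the theorem that the regularized supremum of a family of subharmonic functions is subharmonic and differs from the naive supremum only on a polar set, and one must secure the auxiliary function $\psi$, which is exactly where the defining capacity-zero property of the closed polar set $E$ is used. The boundedness-from-above hypothesis enters precisely at this point, forcing $u+\varepsilon\psi\to-\infty$ along $E$ so that the singular set is absorbed into the $-\infty$-set of a subharmonic function and thereby rendered removable.
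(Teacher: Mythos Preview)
The paper does not prove this proposition at all: it is quoted as a known result with a reference to Ransford, \emph{Potential Theory in the Complex Plane}, Theorem~3.6.1, and is used only as a black box in Section~4. There is therefore no ``paper's own proof'' to compare against. Your argument is essentially the standard textbook proof (in fact, the one in Ransford): auxiliary subharmonic $\psi$ with $E\subset\{\psi=-\infty\}$, the perturbations $u+\varepsilon\psi$ extended by $-\infty$ across $E$, and passage to the regularized increasing limit; uniqueness via zero Lebesgue measure of polar sets and solid averages.

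One small point deserves tightening. In your verification that $v_0^{*}=u$ on $D-E$, the sentence ``while $v_0(z)=u(z)$ gives the reverse'' is valid only for $z\in D-P$; for $z\in (D-E)\cap P$ you have $v_0(z)=-\infty$, not $u(z)$, so the reverse inequality does not follow this way. The fix is immediate and already implicit in your write-up: $v_0^{*}$ is subharmonic on $D$, $u$ is subharmonic on $D-E$, and they agree on $(D-E)\setminus P$, whose complement in $D-E$ is the polar (hence Lebesgue-null) set $P\cap(D-E)$; your own uniqueness argument then forces $v_0^{*}=u$ on all of $D-E$. With that patch the proof is complete.
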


Next, let's look at the definition of an Evans-Selberg potential (cf. \cite[p. 351]{SN}, \cite [p. 114]{SNo}). 

\begin {definition}\label{def-Evans}On an open Riemann surface $\Sigma$, an Evans-Selberg potential $E_{q}(p)$ with a pole $q\in \Sigma$ is a real-valued function satisfying the following conditions:

$(i)$ For all $p\in \Sigma\setminus \{q\}$, $E_{q}(p)$ is harmonic with respect to $p$,  

$(ii)$  $E_{q}(p)-\log|\varphi(p)-\varphi(q)|$ is bounded near $q$, with $\varphi$ being the local coordinate,

$(iii)$ ${ E_{q}(p)} \to +\infty$, as $p\to a_{\infty},$ the Alexandroff ideal boundary point of $\Sigma$.
\end {definition} 

Moreover, if this potential $E(p, q):=E_{q}(p)$ is symmetric in $(p, q)$ (regarded as a function on $\Sigma\times \Sigma$), and $E(p, q)-E(p, q^{\prime})$ are bounded near the boundary for any pair $(q, q^{\prime})$, then $E(p, q)$ is called an Evans kernel (see \cite{Na67}, \cite[p.354]{SN}). Two Evans kernels with the same prescribed singularities at the boundary are up to an additive constant by the Maximum Principle of subharmonic functions. Important properties of an Evans kernel are its joint continuity and uniform convergence, implying that it is approximable by Green kernels.
 
\begin {pro}[Nakai] \label{Nak} Let $E(p, q)$ be an Evans kernel on $\Sigma$, and $G_t(p,q)$ the negative Green kernel on $\Sigma_t:=\{p \in \Sigma \,| E(p, q_0)< t\}$ with a fixed $q_0 \in \Sigma$. Then \begin{equation} \label{EG} E(p, q)=\lim_{t \to +\infty}\left(G_t(p,q)+ t\right)\end{equation} uniformly on each compact subset of $\Sigma\times \Sigma$. \end {pro}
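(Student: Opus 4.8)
\noindent\emph{Proof plan.} The plan is to view $G_t(p,q)+t$ as a harmonic‑in‑$p$ approximation of $E(p,q)$ on the exhausting domains $\Sigma_t$, to bound the error by the maximum principle, and then to upgrade boundedness to convergence using that $\Sigma$ is potential‑theoretically parabolic (which is automatic here, since the mere existence of an Evans kernel forces the existence of an Evans--Selberg potential, hence parabolicity). First I would record the basic geometry: because $E(\cdot,q_0)\to+\infty$ at $a_\infty$, the sublevel sets $\Sigma_t$ are relatively compact, and the component $\Omega_t$ of $\Sigma_t$ containing $q_0$ increases to $\Sigma$ as $t\to+\infty$; in particular, for $t$ large any prescribed compact set — hence both $q_0$ and $q$ — lies in $\Omega_t$. (Since $E(\cdot,q_0)$ is real‑analytic off $q_0$, the level curve $\partial\Omega_t$ is smooth for all but a discrete set of $t$; I would first argue for such regular $t$ and pass to the limit, so I suppress this point below.)

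The engine is the function $u_t(p):=E(p,q)-G_t(p,q)-t$ on $\Omega_t$. Near the pole $q$ both $E(\cdot,q)$ and $G_t(\cdot,q)$ differ from $\log|\varphi(p)-\varphi(q)|$ by a bounded term, so $u_t$ is bounded near $q$ and harmonic on a punctured neighborhood of it; Proposition \ref{RSTH} then makes $u_t$ harmonic across $q$, hence on all of $\Omega_t$. On $\partial\Omega_t$ one has $G_t(\cdot,q)=0$ and $E(\cdot,q_0)=t$, so the boundary values of $u_t$ are precisely $E(p,q)-E(p,q_0)$. Two consequences follow. First, with $q=q_0$ these boundary values vanish, so the maximum principle gives $u_t\equiv0$, i.e.\ $E(p,q_0)=G_t(p,q_0)+t$ \emph{exactly} on $\Omega_t$. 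Second, the Evans‑kernel property makes $E(\cdot,q)-E(\cdot,q_0)$ bounded near $a_\infty$, hence on $\partial\Omega_t$, by a constant $M=M(q,q_0)$ independent of $t$; the maximum principle then yields $|u_t|\le M$ throughout $\Omega_t$. Finally, evaluating $u_t$ at $q_0$, using symmetry of both kernels and the exact identity just proved (at $p=q$), gives $u_t(q_0)=E(q,q_0)-G_t(q,q_0)-t=0$ for all large $t$.

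Now convergence: the family $\{u_t\}$ is locally uniformly bounded by $M$, hence (Harnack's principle, diagonal over the exhaustion) precompact in the topology of local uniform convergence; any subsequential limit is harmonic and bounded on all of $\Sigma$, thus constant by parabolicity, and it vanishes at $q_0$ because $u_t(q_0)=0$. Hence $u_t\to0$, i.e.\ $G_t(p,q)+t\to E(p,q)$, locally uniformly in $p$. To get uniformity on compact subsets of $\Sigma\times\Sigma$ one lets $q$ range over a compact set $Q$: the constant $M$ can be chosen uniform in $q\in Q$ (the boundary values $E(\cdot,q)-E(\cdot,q_0)$ are jointly continuous, so bounded on $\partial\Omega_{t_0}\times Q$, and then bounded on $(\Sigma\setminus\overline{\Omega_{t_0}})\times Q$ by applying the maximum principle for bounded harmonic functions on the parabolic end), so $\{u_t^{\,q}:t\ge t_0,\ q\in Q\}$ is uniformly bounded and vanishes at $q_0$; feeding a would‑be bad sequence $u_{t_k}^{\,q_k}(p_k)$ with $q_k\to q^\ast$, $p_k\to p^\ast$ into the same compactness argument produces a contradiction.

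The main obstacle I anticipate is not any individual estimate but legitimizing the maximum‑principle steps: one needs $G_t(\cdot,q)$ to extend continuously to $\partial\Omega_t$ with boundary value $0$, and one needs the maximum principle to hold for bounded harmonic functions on the \emph{non}-relatively‑compact piece $\Sigma\setminus\overline{\Omega_{t_0}}$. Both are exactly the points at which parabolicity of the whole surface $\Sigma$ (not merely of the finite pieces $\Omega_t$) is genuinely used, together with the regularity of the level curves $\partial\Omega_t$; everything else is the routine interplay of the removable‑singularity theorems quoted above, symmetry of the Evans and Green kernels, and Harnack's principle.
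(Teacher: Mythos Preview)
The paper does not prove Proposition~\ref{Nak}: it is quoted as a result of Nakai, with a reference to \cite{Na67} (and \cite[p.~354]{SN}), and is used purely as a black box in the proof of Theorems~\ref{Evans-C-0} and~\ref{Green-annulus}. There is therefore no argument in the paper to compare your proposal against.

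For what it is worth, your outline is the standard route and is essentially correct. The key identity $E(\cdot,q_0)\equiv G_t(\cdot,q_0)+t$ on $\Omega_t$ via the maximum principle, the uniform bound $|u_t|\le M$ coming from the Evans--kernel boundedness of $E(\cdot,q)-E(\cdot,q_0)$ near $a_\infty$, the anchoring $u_t(q_0)=0$ obtained from symmetry, and the passage to the limit through Harnack compactness plus ``bounded harmonic $\Rightarrow$ constant'' on a parabolic surface are exactly the ingredients one expects. The caveats you flag (regularity of $\partial\Omega_t$ for generic $t$, continuous attainment of the zero boundary values by $G_t$, and the maximum principle on the unbounded end) are the right ones, and each is handled by the parabolicity hypothesis together with the smoothness of level sets of a nonconstant harmonic function away from a discrete set of critical values. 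The uniformity-in-$q$ step is a little telegraphic but the contradiction/compactness scheme you describe does go through once $M$ is chosen uniformly for $q$ in a compact set.
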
 

Proposition \ref{Nak} is useful for computing some explicit formulas of Evans-Selberg potentials. Typical examples of parabolic planar domains are the complex plane $\mathbb C$, finitely-punctured complex planes, and $\mathbb C\setminus \mathbb Z$. Thus, it seems desirable to determine the Evans kernel by $\eqref{EG}$ as long as explicit formulas of Green kernels are known. Meanwhile, the above $\Sigma_t$ and $N_t$ are attainable in some special cases. Finally, we look at the definition of the so-called fundamental metric \cite{SV}.

\begin{definition} \label{fund} On a potential-theoretically parabolic Riemann surface $\Sigma$, the fundamental metric under the local coordinate $z=\varphi(p)$ is defined as \begin{equation*}\label{fundamental metric}
c(z)|dz|^2:=\exp \lim_{q \to p}\left(E_q(p)-\log|\varphi(p)-\varphi(q)|\right)|dz|^2.
\end{equation*} \end{definition}

\section{Explicit formulas} Explicit formulas of the Evans-Selberg potentials are not quite understood, except the case of $\mathbb C$ where the logarithmic kernel $\log |p-q|$ becomes a good candidate. In this section, we provide explicit formulas of the Evans-Selberg potential on punctured complex planes.

\begin{proof} [Proof of Theorems \ref{Evans-C-0} and \ref{Green-annulus}] Without loss of generality assume $q_0=1$ in Proposition 2.2. For any $t>1,$ choose a function $r=r(t)>0$ (to be determined later) such that $r \searrow 0^+$ as $t\to +\infty$ and the annulus $A_r:=\{p\in \mathbb C \, \left| {r}<|p|<{1}/{r}\right.\}$ is equal to $R_T:=\{p \in \Sigma \,| E(p, 1)<\log\left(e^t-e^{-t}\right):=T\}$, which admits a Green kernel $g_T(p,q)$. By \cite[p. 386--388] {CH}, the negative Green kernel for $A_r$ is 

\begin{align*}g_T(&p,q)=\Real \left\{\log \left(r^{\frac{1}{2}} \cdot z^{\frac{-\log q}{2\log r}} \left(\sqrt{\frac{p}{q}}-\sqrt{\frac{q}{p}}\right) \cdot \frac{\prod _{j=1}^{\infty}\left(1-\frac{p}{q}\cdot r^{4j}\right)\left(1-\frac{q}{p}\cdot r^{4j}\right)}{\prod_{j=1}^{\infty}\left(1-pq\cdot r^{4j-2}\right)\left(1-\frac{1}{pq}\cdot r^{4j-2}\right)}\right)\right\}.\end{align*}

By Proposition \ref{Nak}, it follows that  \begin{align*}
E(p, q)= &\lim_{T \to \infty}  \Real \left\{\log \left(r^{\frac{1}{2}} \cdot z^{\frac{-\log q}{2\log r}} \cdot \left(\sqrt{\frac{p}{q}}-\sqrt{\frac{q}{p}}\right)\right) +T\right\}\\
=& \lim_{T \to \infty} \left\{\frac{1}{2} \log r + \log \frac{|p-q|}{\sqrt{|pq|}} +T\right\}. \end{align*}

Choosing $r$ such that \begin{equation}\label{N_t} \lim_{T \to \infty} \left (\frac{1}{2} \log r + T \right)=0,\end{equation} we know that the Evans kernel becomes $$E(p, q)=\log \frac{|p-q|}{\sqrt{|pq|}}.$$ 

It is easy to check that $E(p, q)-E(p, q^{\prime})$ are bounded near the boundary (consisting points $0$ and $\infty$) for any pair $(q, q^{\prime})\in \mathbb C\setminus \{0\}\times \mathbb C\setminus \{0\}$, $E(p, q)$ is symmetric in $(p, q)$, and $E(p, q)$ tends to $+\infty$ at the boundary. Finally, setting $r(t):=e^{-2t}$, we know that they satisfy \eqref{N_t} by definition. Moreover, when $t$ is sufficiently large it holds that $A_r=R_T$, i.e., $$\left \{ r<|p|<\frac{1}{r}\right \}=\left \{ \frac{|p-1|}{\sqrt{|p|}}< e^t-e^{-t} \right \}.$$ 

Thus, Theorem \ref{Green-annulus} is proved. For Theorem \ref{Evans-C-0}, it suffices to check by definition that for any fixed $0<l<1$, $e(p, q)$ gives an Evans kernel on $\mathbb C\setminus \{0\}$. \end{proof}

By dropping the symmetry in $(p, q)$, one can easily construct Evans-Selberg potentials on $\mathbb C\setminus \{0\}$, which gives Theorem \ref{Evans-Selberg-C-0}. Corollary \ref{fund-C-0} then follows from Theorem \ref{Evans-Selberg-C-0} and Definition \ref{fund}. For the case of $\mathbb C\setminus \{0,1\}$, we are not sure how to make the approximation process and to use Proposition \ref{Nak}. Nevertheless, by the formulas in Theorems \ref{Evans-Selberg-C-0} and \ref{Evans-C-0}, we can construct by hand Evans kernels and Evans-Selberg potentials, which yield Theorems \ref{Evans-Selberg-C-0-1} and \ref{Evans-C-0-1}, respectively. Via elliptic functions, the author in \cite{D} constructed an Evans-Selberg potential on a once-punctured complex torus.

\section{Boundary behaviors} 

In this section, we first fix a point $q_0 \in \mathbb C\setminus \{0\}$. For an arbitrary Evans-Selberg potential $E_{q_0} (p)$ with a pole $q_0$, let $D$ be a neighborhood of $0$ such that $E_{q_0} (p)>0$ on $D$. Then, according to Proposition \ref{Bochner} (B$\hat o$chner's Theorem), there exists a constant $b_{0}\geq 0$ such that $ E_{q_0}(p)+b_{0}\cdot \log{|p|}$ is harmonic and bounded on $D\ni 0$. Similarly, for this same $E_{q_0}(p)$, there exists a constant $b_{\infty}\geq 0$ such that $ E_{q_0}(p)-b_{\infty}\cdot \log{|p|}$ is harmonic and bounded near $\infty$. Denote $\max \{b_0, b_{\infty}\}$ by $b_{\max,\,q_0}$ or just $b_{\max}$ (if $q_0$ is not necessarily specified).

\begin{lem} For any fixed ${q_0}\in \mathbb C\setminus \{0\}$, $$\inf\limits_{E_{q_0}(p)} \left\{b_{\max,\,q_0} \right\}=\frac{1}{2}.$$
\end{lem}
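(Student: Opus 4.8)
The plan is to prove the two inequalities $\inf_{E_{q_0}(p)}\{b_{\max,\,q_0}\}\ge\tfrac12$ and $\inf_{E_{q_0}(p)}\{b_{\max,\,q_0}\}\le\tfrac12$ separately; in fact the infimum will turn out to be attained. The crux is the identity
$$b_0+b_\infty=1,$$
valid for \emph{every} Evans-Selberg potential $E_{q_0}(p)$ on $\mathbb C\setminus\{0\}$ with pole $q_0$, where $b_0,b_\infty\ge0$ are the constants fixed before the statement.

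To derive this identity, I would first use condition $(ii)$ of Definition~\ref{def-Evans}: $E_{q_0}(p)-\log|p-q_0|$ is bounded near $q_0$ and harmonic on a punctured neighborhood of $q_0$, so by Proposition~\ref{RSTH} it extends harmonically and $E_{q_0}(p)=\log|p-q_0|+\tilde h(p)$ with $\tilde h$ harmonic near $q_0$. Near $0$ we have the B\^ochner form $E_{q_0}(p)=k_0(p)-b_0\log|p|$ with $k_0$ harmonic on a disc $D\ni 0$, and near $\infty$, passing to the coordinate $w=1/p$ and using Proposition~\ref{RSTH} once more, $E_{q_0}(p)=k_\infty(p)+b_\infty\log|p|$ with $k_\infty$ harmonic across $\infty$. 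Now pick $\rho>0$ small, $R$ large, and $\varepsilon>0$ small so that the closed disc $\{|p-q_0|\le\varepsilon\}$ lies in the annulus $\{\rho<|p|<R\}$ and misses $0$, and apply Green's identity to the harmonic function $E_{q_0}$ on $\Omega:=\{\rho<|p|<R\}\setminus\{|p-q_0|\le\varepsilon\}$: since $\Delta E_{q_0}\equiv 0$ on $\Omega$, the three boundary flux integrals $\int_{\partial\Omega}\partial_n E_{q_0}\,ds$ sum to zero. A bounded harmonic function on a disc contributes zero flux across any circle it bounds, so — after fixing the orientation of the outward normal of $\Omega$ on each circle — the circle $\{|p|=\rho\}$ contributes $2\pi b_0$, the circle $\{|p|=R\}$ contributes $2\pi b_\infty$, and the circle $\{|p-q_0|=\varepsilon\}$ contributes $-2\pi$ (the $-2\pi$ coming from $\partial_r\log r=1/r$ with the normal pointing toward $q_0$). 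Hence $2\pi b_0+2\pi b_\infty-2\pi=0$, i.e.\ $b_0+b_\infty=1$.

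Granting the identity, $b_{\max,\,q_0}=\max\{b_0,b_\infty\}\ge\tfrac12(b_0+b_\infty)=\tfrac12$ for every Evans-Selberg potential, which gives the first inequality. For the second, I would simply exhibit Theorem~\ref{Evans-Selberg-C-0} with $k=\tfrac12$ and any $l\in(0,1)$: for $e_{q_0}(p)=\log\frac{|p-q_0|}{|p|^{1/2}|q_0|^{l}}$ one has $e_{q_0}(p)+\tfrac12\log|p|=\log|p-q_0|-l\log|q_0|$, bounded near $0$, so its $b_0$ equals $\tfrac12$; and since $\log|p-q_0|=\log|p|+\log|1-q_0/p|$ with the last term bounded near $\infty$, also $e_{q_0}(p)-\tfrac12\log|p|$ is bounded near $\infty$, so its $b_\infty$ equals $\tfrac12$. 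Thus this particular potential has $b_{\max,\,q_0}=\tfrac12$, and the infimum equals (indeed is attained at) $\tfrac12$.

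The only step needing care is the flux computation. One must verify that each bounded harmonic remainder — $k_0$, $k_\infty$, $\tilde h$ — contributes nothing to its boundary integral, which is exactly where Proposition~\ref{RSTH} is invoked (to extend the remainder harmonically across the relevant point, making the circle bound a disc of harmonicity), and one must track the sign of the outward normal of $\Omega$ on the two inner circles. Once conventions are pinned down, $b_0+b_\infty=1$ falls out and the rest is immediate.
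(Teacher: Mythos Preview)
Your proof is correct and takes a genuinely different route from the paper's. For the lower bound, the paper argues by contradiction: assuming some Evans--Selberg potential has $b_{\max,\,q_0}<\tfrac12$, it compares that potential with the explicit $e_{q_0}(p)$ of Theorem~\ref{Evans-Selberg-C-0} (with $k=l=\tfrac12$), observes that the difference $E_{q_0}-e_{q_0}$ is harmonic on $\mathbb C\setminus\{0\}$ and bounded above near both $0$ and $\infty$, extends it subharmonically to $\hat{\mathbb C}$ via Proposition~\ref{RSTSH}, and concludes it must be constant --- a contradiction. Your flux computation via Green's identity is more direct and yields the sharper statement $b_0+b_\infty=1$ for \emph{every} Evans--Selberg potential on $\mathbb C\setminus\{0\}$, from which $b_{\max,\,q_0}\ge\tfrac12$ follows at once; the paper's maximum-principle argument establishes only the inequality and relies on the explicit comparison potential, whereas your lower bound is self-contained. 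Both proofs use the same explicit example from Theorem~\ref{Evans-Selberg-C-0} for the upper bound.
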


\begin{proof} First, denote the above left hand side by $b_{\mathbb C\setminus \{0\}}\equiv \inf
\left\{b_{\max,\,q_0} \right\}.$ For fixed $q_0 \in \mathbb C\setminus \{0\}$, from $e_{q_0}(p)$ (for $k=l=1/2$) in Theorem \ref{Evans-Selberg-C-0}, we know that $b_{\mathbb C\setminus \{0\}} \leq {1}/{2}.$ Now, let us assume that $b_{\mathbb C\setminus \{0\}} < {1}/{2}$ and conduct the proof by contradiction. Then, for an arbitrary Evans-Selberg potential $E_ {q_0}(p)$ with a pole ${q_0}$, it holds that $$\overline{\lim\limits_{p\to 0}} \left \{E_{q_0} (p) + b_{\mathbb C\setminus \{0\}} \cdot \log{|p|}\right\}<\infty,$$ and $$\overline{\lim\limits_{p\to \infty}} \left \{E_{q_0} (p)- b_{\mathbb C\setminus \{0\}} \cdot  \log{|p|}\right\}<\infty.$$ Therefore, $E_{q_0}(p)-e_{q_0}(p)<E_{q_0}(p)+ b_{\mathbb C\setminus \{0\}}  \cdot \log|p|-{1}/{2} \cdot \log|p|-e_{q_0}(p)<\infty$, as $p\to 0.$ Similarly, $E_{q_0}(p)-e_{q_0}(p)<\infty$, as $p\to \infty.$ Since $E_{q_0}(p)-e_{q_0}(p)$ is harmonic in $p$ on $\mathbb C\setminus \{0\}$ and bounded from above, according to Proposition \ref{RSTSH}, it extends as a subharmonic function on $\hat{\mathbb C}:=\mathbb C\cup\{\infty\}$. Therefore, $E_{q_0}-e_{q_0}$ must be a constant, which is a contradiction to $b_{\mathbb C\setminus \{0\}} < {1}/{2}$. So, the lemma is proved.
\end{proof} 

Since the final result in the above Lemma does not depend on any particular choice of $q_0$, we further obtain the following theorem for any Evans-Selberg potential $E_q(p)$ on $\mathbb C\setminus \{0\}$ with a pole $q$.

 \begin{theorem}  $$\inf\limits_{E_{q}(p)} \left\{b_{\max,\,q}: q \in \mathbb C\setminus \{0\} \right\}=\frac{1}{2}.$$
\end{theorem}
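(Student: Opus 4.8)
The plan is to deduce this immediately from the preceding Lemma, whose conclusion is the stronger statement that for \emph{each} fixed $q_0\in\mathbb C\setminus\{0\}$ one has $\inf_{E_{q_0}(p)}\{b_{\max,\,q_0}\}=\tfrac12$. Since the right-hand side does not involve $q_0$, taking a further infimum over the pole changes nothing, and the double infimum $\inf_{q}\inf_{E_q(p)}$ is just $\inf_q$ of the constant $\tfrac12$.

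More precisely, I would argue in two directions. For the inequality ``$\le$'', fix any $q\in\mathbb C\setminus\{0\}$ and use the explicit potential $e_q(p)=\log\bigl(|p-q|/(|p|^{1/2}|q|^{1/2})\bigr)$ from Theorem \ref{Evans-Selberg-C-0} with $k=l=\tfrac12$: near $0$ it equals $-\tfrac12\log|p|$ plus a function harmonic and bounded near $0$, and near $\infty$ it equals $\tfrac12\log|p|$ plus a function harmonic and bounded near $\infty$, so for this potential $b_0=b_\infty=\tfrac12$, hence $b_{\max,\,q}=\tfrac12$. Therefore $\inf\{b_{\max,\,q}:q\in\mathbb C\setminus\{0\}\}\le\tfrac12$.

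For the inequality ``$\ge$'', I would invoke the Lemma directly: for every fixed $q$, no Evans-Selberg potential $E_q(p)$ with pole $q$ can have $b_{\max,\,q}<\tfrac12$, since otherwise $E_q(p)-e_q(p)$ (taking again $k=l=\tfrac12$) would be harmonic on $\mathbb C\setminus\{0\}$ and bounded from above near both $0$ and $\infty$, hence would extend subharmonically to $\hat{\mathbb C}$ by Proposition \ref{RSTSH}, forcing it to be constant and contradicting $b_{\max,\,q}<\tfrac12$. This lower bound $b_{\max,\,q}\ge\tfrac12$ holds uniformly in the choice of pole $q$, so the infimum over all $q\in\mathbb C\setminus\{0\}$ is still $\ge\tfrac12$. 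Combining the two inequalities yields the claimed value $\tfrac12$.

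I expect no genuine obstacle here: the substantive content — the removable-singularity plus Maximum-Principle argument — was already carried out in the Lemma, and the only new observation is that the extremal value $\tfrac12$ is independent of the pole, so adjoining the extra infimum over $q$ is harmless. The single point worth stating carefully is precisely this interchange/collapse of infima, namely that $\inf_{q}\bigl(\inf_{E_q(p)}\{b_{\max,\,q}\}\bigr)=\inf_q\{\tfrac12\}=\tfrac12$.
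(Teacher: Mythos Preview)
Your proposal is correct and matches the paper's own argument: the paper simply remarks that since the Lemma's value $\tfrac12$ is independent of the choice of pole $q_0$, the extra infimum over $q$ leaves it unchanged. Your elaboration of the two inequalities (the explicit $e_q$ for ``$\le$'' and the removable-singularity contradiction for ``$\ge$'') just unpacks the Lemma's content and is faithful to the paper's approach.
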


For the domain $\mathbb C\setminus\{0,1\}$, by a similar argument, we can find constants $b_{0}, b_{1}, b_{\infty}\geq 0$ such that $E_{q}(p)+b_{0}\cdot \log{|p|}$, $E_{q}(p)+b_{1}\cdot \log{|p-1|}$, and $E_{q}(p)-b_{\infty}\cdot \log{|p|}$ are all harmonic and bounded from above near $0$, $1$, and $\infty$, respectively. Denote $\max \{b_0, b_1, b_{\infty}\}$ by $b_{\max}$. Then, we have the following result for any Evans-Selberg potential $E_q(p)$ on $\mathbb C\setminus \{0, 1\}$ with a pole $q$.

 \begin{theorem} $$\inf\limits_{E_{q}(p)} \left\{b_{\max,\,q}: q \in \mathbb C\setminus \{0, 1\} \right\}=\frac{1}{3}.$$
\end{theorem}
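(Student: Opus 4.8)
The plan is to mirror the structure of the proof of the preceding lemma for $\mathbb C\setminus\{0\}$, replacing the role of the explicit potential from Theorem \ref{Evans-Selberg-C-0} by the one from Theorem \ref{Evans-Selberg-C-0-1}. First I would establish the upper bound $b_{\mathbb C\setminus\{0,1\}}\le 1/3$ by exhibiting a single Evans-Selberg potential whose three exponents are all equal to $1/3$: take $k=l=m=n=1/3$ in the formula of Theorem \ref{Evans-Selberg-C-0-1}, which is admissible since $k+m=2/3<1$ and $l+n=2/3<1$. For this choice, $b_0=b_1=1/3$ (the exponents of $\log|p|$ and $\log|p-1|$ in the pole-free variable $p$) and $b_\infty=1/3$ as well, because near $\infty$ the function behaves like $-(k+m)\log|p|=-\tfrac13\log|p|$ up to a bounded harmonic term; hence $b_{\max}=1/3$ for this potential.

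For the matching lower bound I would argue by contradiction, assuming $b_{\mathbb C\setminus\{0,1\}}<1/3$. Then for an arbitrary Evans-Selberg potential $E_{q_0}(p)$ with pole $q_0$, we could find a common constant $\beta<1/3$ with $E_{q_0}(p)+\beta\log|p|$, $E_{q_0}(p)+\beta\log|p-1|$ bounded above near $0$ and $1$ respectively, and $E_{q_0}(p)-2\beta\log|p|$ bounded above near $\infty$ (the factor $2\beta$ because there are two finite boundary points contributing $-\log|p|$ terms at infinity). I would then compare $E_{q_0}(p)$ with the explicit $e_{q_0}(p)$ from Theorem \ref{Evans-Selberg-C-0-1} using exponents $k=m=\beta'$ and $l=n$ chosen so that $\beta<\beta'<1/3$; since $2\beta'<2/3<1$ this is a legitimate choice. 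The difference $E_{q_0}(p)-e_{q_0}(p)$ is harmonic on $\mathbb C\setminus\{0,1\}$ (the logarithmic poles at $q_0$ cancel), and near each of $0$, $1$, $\infty$ it is bounded above because $\beta'>\beta$ makes the $e_{q_0}$ contribution dominate. By Proposition \ref{RSTSH} it extends subharmonically to $\hat{\mathbb C}$, hence is constant, forcing $E_{q_0}-e_{q_0}\equiv c$; but then $E_{q_0}$ itself has $b_{\max}=\beta'>\beta\ge b_{\mathbb C\setminus\{0,1\}}$ after taking the infimum over all potentials — wait, more carefully: the contradiction is that $e_{q_0}$ with exponents $\beta'$, being of the form $E_{q_0}+c$, would be a legitimate Evans-Selberg potential, yet \emph{every} such potential has its three boundary exponents at least $\beta'>\beta$, contradicting that the infimum equals some value $<1/3$. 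One has to phrase this last step so that it genuinely contradicts the assumed strict inequality; the cleanest route is to show that the assumption $b_{\mathbb C\setminus\{0,1\}}<1/3$ implies every potential is, up to a constant, one with all exponents $\ge\beta'$, but $\beta'$ can be pushed arbitrarily close to $1/3$, giving $b_{\mathbb C\setminus\{0,1\}}\ge 1/3$.

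I expect the main obstacle to be the bookkeeping at the point $\infty$: one must correctly account for the fact that two finite punctures each contribute a $\log|p|$ term there, so the ``effective'' exponent at infinity is the sum of the two finite exponents rather than either one individually. This is exactly why the answer is $1/3$ rather than $1/2$ — the constraint $b_0=b_1=b$ together with $b_\infty=2b\le b_{\max}$ and the admissibility bound coming from $k+m<1$, $l+n<1$ balances at $b=b_\infty=1/3$. A secondary subtlety is ensuring that when we symmetrize the exponents (choosing $k=m$ and $l=n$) we stay inside the region $k+m<1$, $l+n<1$; this is automatic for values near $1/3$. The harmonicity of $E_{q_0}-e_{q_0}$ and the application of Proposition \ref{RSTSH} with polar set $E=\{0,1,\infty\}$ go through verbatim as in the $\mathbb C\setminus\{0\}$ case.
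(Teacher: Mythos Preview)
Your overall strategy matches the paper's: the paper simply says the theorem follows ``by a similar argument'' to the preceding lemma, i.e.\ exhibit the explicit potential from Theorem \ref{Evans-Selberg-C-0-1} with $k=l=m=n=1/3$ for the upper bound, and for the lower bound compare an arbitrary $E_{q_0}$ with this explicit $e_{q_0}$ via Proposition \ref{RSTSH}.

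However, your computation of $b_\infty$ is wrong throughout, and this error drives several incorrect statements. Near $p=\infty$ the explicit potential
\[
e_{q_0}(p)=\log|p-q_0|-k\log|p|-m\log|p-1|+\text{(terms in $q_0$ only)}
\]
behaves like $(1-k-m)\log|p|$, not like $-(k+m)\log|p|$: you have dropped the $\log|p-q_0|\sim\log|p|$ contribution. Thus $b_\infty=1-k-m$, and with $k=m=1/3$ one gets $b_\infty=1-2/3=1/3$ (your line ``$-(k+m)\log|p|=-\tfrac13\log|p|$'' is wrong both in formula and in arithmetic, and happens to land on the right number only by accident). The same mistake reappears in your lower-bound sketch: the claim ``$E_{q_0}(p)-2\beta\log|p|$ bounded above near $\infty$'' and the heuristic ``$b_\infty=2b$'' are incorrect; the genuine constraint is $b_0+b_1+b_\infty=1$ (equivalently $b_\infty=1-b_0-b_1$), which is what forces $b_{\max}\ge 1/3$.

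Once this is fixed the argument goes through cleanly. Taking $e_{q_0}$ with $k=m=1/3$ gives $b_0^{(e)}=b_1^{(e)}=b_\infty^{(e)}=1/3$; if some $E_{q_0}$ had $b_{\max}<1/3$ then $E_{q_0}-e_{q_0}$ is harmonic on $\mathbb C\setminus\{0,1\}$ and bounded above near each of $0,1,\infty$ (since each $b_i<1/3$), so by Proposition \ref{RSTSH} it extends subharmonically to $\hat{\mathbb C}$ and is constant, forcing $b_0=1/3$ for $E_{q_0}$---a direct contradiction. There is no need for the auxiliary $\beta'\in(\beta,1/3)$ or the meandering final paragraph.
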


\subsection*{Acknowledgements} The author thanks Profs T. Ohsawa and M. Adachi for the fruitful discussions. He also thanks Prof. H. Yamaguchi for the very valuable comments. This work is supported by the Ideas Plus grant 0001/ID3/2014/63 of the Polish Ministry of Science and Higher Education, KAKENHI and the Grant-in-Aid for JSPS Fellows (No. 15J05093).

\fontsize{10}{11}\selectfont

\bigskip

Instytut Matematyki, Uniwersytet Jagiello\'nski, \L{}ojasiewicza 6, 30-348 Krak\'ow, Poland;\\
Email: 1987xindong@tongji.edu.cn


\begin{thebibliography}{HD}
\bibitem[ABR]{ABR} S. Axler, P. Bourdon and W. Ramey, \emph{Harmonic Function Theory}, GTM, 137, Springer, New York, 1992.

\bibitem[B]{B} R. P. Boas, \emph{Invitation to Complex Analysis}, 2nd ed., MAA Textbooks, Washington, DC, 2010.

\bibitem[CH]{CH} R. Courant and D. Hilbert, \emph{Methoden der mathematischen Physik I}, dritte Auflage, Springer, Berlin, 1931.
 
\bibitem[D]{D} {R. X. Dong},  \textit{Suita conjecture for a punctured torus}, New Trends in Analysis and Interdisciplinary Applications, 199--206, Trends in Mathematics, Birkh\"{a}user/Springer, Basel, 2017.

\bibitem[Ev]{Ev} G. C. Evans, \emph{Potentials and positively infinite singularities of harmonic functions},
Mh. Math. u. Phys. {\bf 43} (1936), 419--424.
 
\bibitem[Ku]{Ku} Z. Kuramochi, \emph{Mass distributions on the ideal boundaries of abstract Riemann surfaces. I}, Osaka Math. J. {\bf 8} (1956), 119--137.
 
 \bibitem[M]{M} {N. Mok},  \emph{The Serre problem on Riemann surfaces}, Math. Ann. {\bf 258} (1981/82), 145--168.
 
\bibitem[Na62]{Na62} M. Nakai, \emph{On Evans potential}, Proc. Japan Acad. {\bf 38} (1962), 624--629.

\bibitem[Na67]{Na67} M. Nakai, \emph{On Evans kernel}, Pacific J. of Math. {\bf 22} (1967), 125--137.
 
\bibitem [Ra]{Ra} T. Ransford,   \emph{Potential theory in the complex planes}, LMSST, {\bf 28}, Cambridge Univ. Press, Cambridge, 1995.

\bibitem [Ro]{Ro} J. W. Robbin,  \emph{The Uniformization Theorem},  \href{Http:www.math.wisc.edu/~robbin/951dir/uniformization.pdf}{{\color{blue} {http://www.math.wisc.edu/$\sim$robbin/951dir/uniformization.pdf}}}

\bibitem [SN]{SN} L. Sario and M. Nakai, \emph{Classification theory of Riemann surfaces}, Grundlehren Math. Wiss, Springer, New York, 1970.

\bibitem[SNo]{SNo} L. Sario and K. Noshiro, \emph{Value Distribution Theory}, Van Nostrand, Princeton, NJ, 1966.

\bibitem[SV]{SV} A. Schuster and D. Varolin, \emph{Interpolation and Sampling for Generalized Bergman Spaces on finite Riemann surfaces}, Rev. Mat. Iberoam. {\bf 24} (2008), 499--530.

\bibitem[Se]{Se} H. L. Selberg, \emph{\"{U}ber die ebenen Punktmengen von der Kapazit\"{a}t Null}. Avh. Norske Videnskaps-Akad. Oslo I Math.-Natur, {\bf 10} (1937).
\end{thebibliography}
\end{document}